\documentclass[11pt]{article}
\usepackage{amssymb}
\usepackage{graphicx}
\usepackage{setspace}
  \usepackage{paralist}
  \usepackage{longtable}
   \usepackage{multirow}
    \usepackage{rotating}
\usepackage{marginnote}
\usepackage{mathrsfs}
\usepackage{amsmath, amsthm, amssymb}
\usepackage{authblk}
\usepackage{graphicx}
\usepackage{float}
\usepackage{hyperref}
\usepackage[margin=1in]{geometry}
\usepackage{comment}
\usepackage{color}
\usepackage{cite}
\usepackage{indentfirst}

\allowdisplaybreaks[4]
\numberwithin{equation}{section}
\setlength{\textwidth}{6.5truein} \setlength{\textheight}{9.3truein}
\setlength{\oddsidemargin}{-0.0in}
\setlength{\evensidemargin}{-0.0in}
\setlength{\topmargin}{-0.4truein}
\date{}
%

\newtheorem{theorem}{Theorem}[section]
\newtheorem*{theorem*}{Theorem}

\newtheorem{definition}[theorem]{Definition}

\theoremstyle{remark}

\begin{document}


\title{Direct Method of Scaling Spheres for the Laplacian and Fractional Laplacian Equations with Hardy-H\'enon Type Nonlinearity}

\author{Meiqing Xu\footnote{School of Mathematical Sciences, CMA-Shanghai, Shanghai Jiao Tong University, China. The author is partially supported by NSFC-12031012, NSFC-11831003 and the Institute of Modern Analysis-A Frontier Research Center of Shanghai.}}

\maketitle
\begin{abstract}
In this paper, we focus on the partial differential equation
\begin{equation*}
    (-\Delta)^\frac{\alpha}{2} u(x)=f(x,u(x))\;\;\;\;\text{ in }\mathbb{R}^n,
\end{equation*}
where $0<\alpha\leq 2$. By the direct method of scaling spheres investigated by Dai and Qin (\cite{dai2023liouville}, \textit{International Mathematics Research Notices, 2023}), we derive a Liouville-type theorem. This mildly extends the previous researches on Liouville-type theorem for the semi-linear equation $ (-\Delta)^\frac{\alpha}{2} u(x)=f(u(x))$ where the nonlinearity $f$ depends solely on the solution $u(x)$, and covers the Liouville-type theorem for Hardy-H\'enon equations $(-\Delta)^\frac{\alpha}{2} u(x)=|x|^au^p(x)$.

\noindent{\bf{Keywords}}: Liouville-type theorem, equivalence, fractional Laplacians, method of scaling spheres.

\noindent{\bf {MSC 2020}}: Primary: 35R11;  Secondary: 35B50, 35C15.
\end{abstract}

\section{Introduction}
In this paper we consider the partial differential equation
\begin{equation}\label{main PDE}
    (-\Delta)^\frac{\alpha}{2} u(x)=f(x,u(x))\;\;\;\;\text{ in }\mathbb{R}^n,
\end{equation}
where $n>\alpha$, $0<\alpha\leq 2$, and the Hardy-H\'enon type nonlinearity $f:(\mathbb{R}^n\backslash \{0\}) \times [0,+\infty)\to [0,+\infty)$.
For $\alpha=2$, $(-\Delta)^\frac{\alpha}{2}$ is the classical Laplacians operator, that is, $-\Delta = -\sum_{i=1}^n \frac{\partial^2}{\partial x_i^2}$. For $\alpha$ taking any real number between 0 and 2, $(-\Delta)^\frac{\alpha}{2}$ is a nonlocal differential operator defined by
\begin{equation}\label{fraction definition}
  (-\Delta)^\frac{\alpha}{2}u(x)=C_{n,\alpha}P.V. \int_{\mathbb{R}^n} \frac{u(x)-u(y)}{|x-y|^{n+\alpha}}dy.
\end{equation}
where P.V. is the Cauchy principal value. The integral on the right-hand
side of (\ref{fraction definition}) is well defined for $u\in C_{loc}^{[\alpha],\{\alpha\}+\epsilon}(\mathbb{R}^n) \cap\mathcal{L}_\alpha(\mathbb{R}^n)$, where $\epsilon>0$ is arbitrary small, $[\alpha]$ denotes the integer part of $\alpha$, $\{\alpha\}=\alpha-[\alpha]$,
\begin{equation}
  \mathcal{L}_\alpha(\mathbb{R}^n)=\{u:\mathbb{R}^n\rightarrow\mathbb{R}| \int_{\mathbb{R}^n} \frac{|u(x)|}{(1+|x|^{n+\alpha})}dx <\infty\}.
\end{equation}
For more details, see \cite{silvestre,chen2020fractional}.

For $\alpha=2$, we assume the solution $u\in C^2(\mathbb{R}^n\backslash\{0\})\cap C(\mathbb{R}^n)$. For $0<\alpha<2$, we assume the solution $u\in  C_{loc}^{[\alpha],\{\alpha\}+\epsilon}(\mathbb{R}^n\backslash\{0\}) \cap\mathcal{L}_\alpha(\mathbb{R}^n)$.


\begin{definition}
    A function $g(x,u)$ is called locally Lipschitz on $u$ in $\mathbb{R}^n\times [0,+\infty)$, provided that for any $u_0\in [0,+\infty)$ and $\Omega\subset\mathbb{R}^n$ bounded, there exists a (relatively) open neighborhood $U(u_0)\subset[0,+\infty)$ such that $g$ is Lipschitz continuous on $u$ in $\Omega\times U(u_0)$.
\end{definition}

We need the following assumptions about the Hardy-H\'enon type nonlinearity $f(x,u)$.
\begin{enumerate}
    \item [$(f_0)$] The nonlinearity $f$ has subcritical growth, i.e.
    \begin{equation*}
        f(t^{-\frac{2}{n-\alpha}}x,tu(x))/{t^\frac{n+\alpha}{n-\alpha}}>f(x,u(x))\;\;\;\; \text{for any $t> 1$ and $(x,u)\in(\mathbb{R}^n\backslash \{0\})\times \mathbb{R}^n_+$}.
    \end{equation*} 
    \item [$(f_1)$] The nonlinearity $f$  is nondecreasing about $u$ in $(\mathbb{R}^n\backslash \{0\}) \times \mathbb{R}^n_+$.
    \item[$(f_2)$] There exists a $\sigma<\alpha$ such that $|x|^\sigma f(x,u)$ is locally Lipschitz on $u$ in $\mathbb{R}^n\times [0,+\infty)$.
    \item[$(f_3)$] There exists a cone $\mathcal{C}\subset\mathbb{R}^n$ with vertex at 0, constants $C>0$, $-\alpha<a<+\infty$, $0<p<p_c(a)$ such that
    \begin{equation*}
        f(x,u)\geq C|x|^a u^p(x)\;\;\;\;\text{in $\mathcal{C}\times [0,+\infty)$.}
    \end{equation*} 
\end{enumerate}

The Liouville type theorems of the Lane-Emden equations, Hardy-H\'enon equations and equations with general nonlinearities have been extensively studied (see \cite{MR4577475,MR4350191,MR4129482,MR3528525,MR3485438,MR3431273, cao2013liouville,chen2013super2, chen2015liouville,zhuo2019liouville,cheng2016liouville, fang2012liouville}). In the study of Liouville-type theorems concerning the nonexistence of non-trivial solutions, the most commonly employed approach is the method of moving planes. This method has been extensively explored in prior works such as \cite{caffarelli,berestycki1988monotonicity,chen1997priori,chenw,li} and further developed in recent research contributions, including \cite{ouyang2023priori,MR3600062,cheng2017direct,chen2,chen4,chenw,dai2020liouville2}. However, the method of moving planes has limitations. For example, it requires the solution to be bounded or integrable.
In \cite{dai2023liouville}, Dai and Qin introduced the method of scaling spheres, which offers significant advantages. Notably, in the fractional case, it does not necessitate any boundedness or integrability of the solution, in contrast to the method of moving planes. This variation can potentially provide novel perspectives for exploring Liouville-type theorems. Another distinct feature is that the potential term in nonlinearity does not need to be strictly decreasing after the Kelvin transform, as required by the method of moving planes. This extension greatly broadens the range of the exponent $p$, from $1<p<\frac{p+a+\alpha}{n-\alpha}$ to $0<p<\frac{p+2a+\alpha}{n-\alpha}$. 
To be precise, by the direct method of scaling spheres, Dai and Qin expanded the range of $p$ in the following ways (only relevant parts of the results are listed here):
\begin{enumerate}
    \item For nonnegative solution $u$ of the equation 
    \begin{equation*}
        (-\Delta) ^\frac{\alpha}{2} u(x)=|x|^a u^p(x),\;\;\;\; x\in\mathbb{R}^n,
    \end{equation*}
    where $0<\alpha\leq 2$, $n>\alpha$, $-\alpha<a<+\infty$, $0<p<\frac{p+2a+\alpha}{n-\alpha}$, $u$ can only be trivial.
 
  

    \item For a non-negative solution $u$ of the equation 
    \begin{equation*}
            \begin{cases}
            (-\Delta) ^\frac{\alpha}{2} u(x)=f(x,u),\;\;\;\; &x\in\mathbb{R}^n_+,\\
            u(x)=0,\;\;\;\; &x\in(\mathbb{R}^n_+)^c,
        \end{cases}
    \end{equation*}
    where $0<\alpha\leq 2$, $n>\alpha$, and Hardy-H\'enon type nonlinearity $f$ is subcritical and satisfies assumptions $(f_1)$, $(f_2)$ and $(f_3)$, $u$ can only be trivial.
\end{enumerate}
Dai and Qin also indicated that the problems addressable with the direct method of scaling spheres are not limited to the ones mentioned above. In fact, they explicitly point out that method of scaling spheres can also be employed to equation
\begin{equation*}
    (-\Delta)^\frac{\alpha}{2}u(x)=f(x,u),\;\;\;\; x\in\mathbb{R}^n.
\end{equation*}
with Hardy-H\'enon type nonlinearity $f$. By further developing their idea and applying the method of scaling spheres, we derive the following Liouville-type theorem.
\begin{theorem}\label{main thm1}
    Assume $n>\alpha$, $0<\alpha\leq 2$, $0<p_c(a):=\frac{n+\alpha+2a}{n-\alpha}$. Suppose $f$ is subcritical and satisfies the assumption $(f_1)$, $(f_2)$ and $(f_3)$, and $u$ is a nonnegative solution of (\ref{main PDE}). Then $u\equiv 0$, i.e. any nonnegative solution of (\ref{main PDE}) is trivial.
\end{theorem}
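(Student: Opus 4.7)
The plan is to follow the direct method of scaling spheres of Dai–Qin, adapted to the whole-space Hardy–Hénon nonlinearity. As a preliminary step, I would establish that any nonnegative solution $u$ of \eqref{main PDE} satisfies the integral representation
\begin{equation*}
 u(x) \;=\; c_{n,\alpha}\int_{\mathbb{R}^n}\frac{f(y,u(y))}{|x-y|^{n-\alpha}}\,\mathrm{d}y,
\end{equation*}
which is the standard equivalence between the (fractional) Poisson problem and its Riesz-potential formulation (for $\alpha=2$ this is the Newtonian potential). The hypotheses $(f_2)$ and $(f_3)$, together with the subcritical growth $(f_0)$, let one verify the tail integrability required to justify this equivalence, and in the process also establish that either $u\equiv 0$ (in which case we are done) or $u>0$ in $\mathbb{R}^n\setminus\{0\}$. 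I will proceed under the assumption $u\not\equiv 0$ and aim for a contradiction.

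Next I would set up the scaling-spheres comparison. For $\lambda>0$ let $x^\lambda=\lambda^2 x/|x|^2$ be the Kelvin reflection about $\partial B_\lambda(0)$, and define
\begin{equation*}
 u_\lambda(x) := \Bigl(\tfrac{\lambda}{|x|}\Bigr)^{n-\alpha} u(x^\lambda),\qquad w_\lambda(x):=u_\lambda(x)-u(x),\quad x\in B_\lambda(0)\setminus\{0\}.
\end{equation*}
Using the integral representation, one derives an integral inequality for $w_\lambda^+$ on the set $\{w_\lambda>0\}\cap B_\lambda$. Monotonicity in $u$ from $(f_1)$, the local Lipschitz bound in $(f_2)$, and the subcritical scaling inequality $(f_0)$ combine to yield, for $\lambda$ sufficiently small, that the integral operator driving this inequality is a contraction on $L^{q}(\{w_\lambda>0\})$ for a suitable $q$; this forces $w_\lambda\le 0$ in $B_\lambda\setminus\{0\}$, giving the base case. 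One then defines
\begin{equation*}
 \lambda_0:=\sup\bigl\{\lambda>0\,:\, u_\mu\le u\text{ in }B_\mu\setminus\{0\}\text{ for all }0<\mu\le\lambda\bigr\}
\end{equation*}
and shows $\lambda_0=+\infty$; if $\lambda_0<+\infty$, the subcriticality $(f_0)$ produces a strict inequality that propagates and contradicts the definition of $\lambda_0$ by slightly enlarging the sphere.

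Once $u_\lambda\le u$ holds for every $\lambda>0$, choosing $x$ with $|x|>\lambda$ and letting $\lambda\to 0^+$ via a point where $u$ is positive yields the asymptotic lower bound
\begin{equation*}
 u(x) \;\ge\; \frac{C}{|x|^{n-\alpha}}\qquad\text{for }|x|\text{ large.}
\end{equation*}
Inserting this into the integral equation together with the cone bound $(f_3)$ gives
\begin{equation*}
 u(x)\;\ge\; C\int_{\mathcal{C}\cap\{|y|\ge R\}}\frac{|y|^{a}u(y)^p}{|x-y|^{n-\alpha}}\,\mathrm{d}y,
\end{equation*}
and a direct estimate then upgrades the decay exponent from $n-\alpha$ to $n-\alpha-(a+\alpha-p(n-\alpha))$ (modulo $\log$ factors and boundary cases). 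Iterating this bootstrap produces a decreasing sequence of exponents $\mu_k$; the subcritical condition $p<p_c(a)=\frac{n+\alpha+2a}{n-\alpha}$ guarantees $\mu_k$ strictly decreases by a fixed positive amount at each step, so after finitely many iterations the exponent becomes so small that
\begin{equation*}
 \int_{\mathcal{C}\cap\{|y|\ge R\}} |y|^{a} u(y)^{p}\,|x-y|^{\alpha-n}\,\mathrm{d}y \;=\; +\infty,
\end{equation*}
contradicting the finiteness of $u(x)$ at any fixed $x$. This contradiction forces $u\equiv 0$.

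The main technical obstacle I anticipate is the starting step of the scaling procedure: unlike the Lane–Emden or pure Hardy–Hénon case, the nonlinearity $f(x,u)$ is not a pure power and need not be monotone in $|x|$, so the standard narrow-region/Hardy–Littlewood–Sobolev contraction argument has to be re-run using $(f_2)$ and the weight $|x|^{-\sigma}$ to absorb the singularity at $0$, and using $(f_0)$ (rather than the pointwise decrease of a Kelvin-transformed potential) to compare $f(x^\lambda,u_\lambda)$ with $f(x,u)$. A secondary delicate point is the bookkeeping in the bootstrap iteration, where one has to verify that each iterate is still an admissible lower bound—that is, that the induced integral converges on appropriate dyadic annuli—until the subcritical strict inequality is consumed.
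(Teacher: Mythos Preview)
Your overall strategy follows the paper's, but two concrete points are wrong and the second one is fatal for the full subcritical range.

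First, the direction of the scaling-spheres inequality is reversed. One shows $u_\lambda \geq u$ in $B_\lambda\setminus\{0\}$, not $u_\lambda \leq u$. Indeed, near the origin $u_\lambda(x)=(\lambda/|x|)^{n-\alpha}u(x^\lambda)$ blows up while $u$ stays bounded, so $w_\lambda=u_\lambda-u\leq 0$ cannot hold there; and the subcritical assumption $(f_0)$ gives, for $|x|<\lambda$,
\[
(-\Delta)^{\alpha/2}u_\lambda(x) \;>\; f(x,u_\lambda(x)),
\]
which yields $(-\Delta)^{\alpha/2}w_\lambda > c_\lambda(x)\,w_\lambda$ on $B_\lambda^-:=\{w_\lambda<0\}$, the correct setup for a maximum/narrow-region principle forcing $w_\lambda\geq 0$.

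Second, and more importantly, the lower bound you extract from ``$u_\lambda\le u$ for all $\lambda$, then $\lambda\to 0^+$'' is both derived incorrectly (with the right sign the limit $\lambda\to 0^+$ gives nothing) and has the wrong exponent. The decisive output of the scaling spheres is obtained by setting $\lambda=|x|^{1/2}$ in the relation $u(x)\geq(\lambda/|x|)^{n-\alpha}u(\lambda^2 x/|x|^2)$ for $|x|\geq\lambda$, which yields
\[
u(x)\;\geq\;\Bigl(\min_{|\xi|=1}u(\xi)\Bigr)\,|x|^{-\frac{n-\alpha}{2}},\qquad |x|\geq 1.
\]
This starting exponent $\mu_0=\tfrac{n-\alpha}{2}$ is exactly what makes the bootstrap $\mu_{k+1}=p\mu_k-(a+\alpha)$ monotone decreasing under $p<p_c(a)=\tfrac{n+\alpha+2a}{n-\alpha}$: the first step decreases iff $(p-1)\mu_0<a+\alpha$, i.e.\ iff $p<p_c(a)$, and for $p>1$ the whole sequence then runs off to $-\infty$. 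With your starting exponent $\mu_0=n-\alpha$ (which, incidentally, already follows from the integral representation without any scaling), the first step decreases only when $p<\tfrac{n+a}{n-\alpha}$, so for $p\in\bigl[\tfrac{n+a}{n-\alpha},\,p_c(a)\bigr)$ your iteration goes the wrong way and the contradiction never arrives. The halving of the exponent via the sphere scaling is the whole point of the method here.

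As a minor remark, your use of an $L^q$/HLS contraction for the starting step is a legitimate alternative to the paper's PDE-based narrow region principle; either route works once the sign is fixed.
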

To establish Theorem \ref{main thm1}, we begin by giving the equivalence (Theorem \ref{equiv thm}) between (\ref{main PDE}) and the corresponding integral equation
\begin{equation}\label{main IE}
u(x)=C_{n,\alpha}\int_{\mathbb{R}^n} \frac{f(y,u(y))}{|x-y|^{n-\alpha}} dy.
\end{equation}
Then we establish the narrow region principle (Theorem \ref{thm narrow}), and use the (direct) method of scaling spheres.

In particular, $f(x,u)=|x|^a u^p$ with $a>-\alpha$ satisfies all the assumptions in Theorem \ref{main thm1}. Therefore, this result covers the Liouville-type theorem presented in \cite{dai2023liouville} for the equation $(-\Delta)^\frac{\alpha}{2}u(x)=|x|^a u^p$. In \cite{yu2013liouville}, Yu applied the method of moving planes in the integral form to get the Liouville-type theorem of the integral equation
\begin{equation*}
    u(x)=\int_{\mathbb{R}^n} \frac{f(u(y))}{|x-y|^{n-\alpha}} dy,\;\;\;\;x\in \mathbb{R}^n,
\end{equation*}
where $f(u)$ is continuous and non-decreasing, and $f(t)/t^\frac{n+\alpha}{n-\alpha}$ is non-decreasing. Also in \cite{chen2017direct}, Chen, Li and Zhang applied the direct method of moving spheres to get the Liouville-type theorem of the partial differential equation
\begin{equation*}
    (-\Delta)^\frac{\alpha}{2}u(x)=f(u(x)),\;\;\;\;x\in \mathbb{R}^n,
\end{equation*}
where $f(u)$ is locally bounded and $f(t)/t^\frac{n+\alpha}{n-\alpha}$ is non-decreasing. Distinguishing our research from previous works where the nonlinearity $f(u)$ depends solely on the solution $u$, we midly extend the scope by considering the spatial dependence in the nonlinearity $f(x,u(x))$. 
For additional research on fractional Laplacians with non-linearity, see \cite{dai2021liouville,dai2023liouville,ros2014pohozaev,fall2016monotonicity,fall2012nonexistence,ros2014dirichlet,fall2014unique,ros2015nonexistence,liu2022dirichlet,liu2016radial,li2017symmetry,yu2016solutions}. For further studies using the method of scaling spheres, see \cite{peng2022existence,peng2021liouville,peng2023classification,dai2019liouville,dai2020liouville,dai2021liouville2,le2021method,dai2023method,dai2022nonexistence,cao2021liouville}

As mentioned in \cite{dai2023liouville}, when dealing with fractional Laplacian, it is often more convenient to apply the Hardy-Littlewood-Sobolev inequality and the method of scaling spheres in integral form. We hope to apply the latter approach to the higher-order fractional Laplacian equations with general nonlinearity and get the Liouville-type theorem. 

The paper is organized as follows. In Section 2 we prove the equivalence between PDE (\ref{main PDE}) and IE (\ref{main IE}). In Section 3 we leverage contradiction arguments, the narrow region principle (Theorem \ref{thm narrow}) and the direct method of scaling spheres to prove Theorem \ref{main thm1}.
\section{Equivalence between PDE and IE}
We first show the equivalence between (\ref{main PDE}) and the integral equation
i.e. the solution of the PDE also solves the IE (\ref{main IE}). The idea comes from \cite{zhuo2015symmetry,dai2023liouville}.
\begin{theorem}\label{equiv thm}
   Assume $n>\alpha$, $0<\alpha\leq 2$ and $0<p<+\infty$. Suppose $f$ is subcritical and $f$ satisfies the assumption $(f_1)$, $(f_2)$ and $(f_3)$. Suppose $u$ is a non-trivial solution of (\ref{main PDE}). Then it also solves the IE (\ref{main IE}), and vice versa.
\end{theorem}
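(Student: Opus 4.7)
The plan is to prove the two directions separately; the direction IE $\Rightarrow$ PDE reduces to a standard computation of $(-\Delta)^{\alpha/2}$ applied to a Riesz potential, so I will focus on the harder direction PDE $\Rightarrow$ IE. I will follow the Green-function truncation strategy used in \cite{zhuo2015symmetry,dai2023liouville}. Let $G_R^\alpha(x,y)$ denote the Dirichlet Green function of $(-\Delta)^{\alpha/2}$ on the ball $B_R(0)$, and define
\begin{equation*}
v_R(x) := \int_{B_R(0)} G_R^\alpha(x,y)\, f(y,u(y))\, \intd y.
\end{equation*}
Using $(f_2)$ and the local regularity of $u$ on $\mathbb{R}^n\setminus\{0\}$, $v_R$ is a well-defined non-negative function satisfying $(-\Delta)^{\alpha/2} v_R = f(\cdot, u)$ in $B_R(0)$ and vanishing outside $B_R(0)$ (fractional case) or on $\partial B_R(0)$ (local case). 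The difference $w_R := u - v_R$ is then $\alpha$-harmonic in $B_R(0)$ with $w_R = u \geq 0$ off the ball, so the maximum principle yields $u \geq v_R$ everywhere.

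Next, I would let $R\to\infty$. Because $G_R^\alpha(x,y)$ increases monotonically in $R$ to the Riesz kernel $C_{n,\alpha}|x-y|^{-(n-\alpha)}$, the monotone convergence theorem gives
\begin{equation*}
v_R(x) \;\nearrow\; v_\infty(x) := C_{n,\alpha}\int_{\mathbb{R}^n} \frac{f(y,u(y))}{|x-y|^{n-\alpha}}\, \intd y,
\end{equation*}
and hence $u(x)\geq v_\infty(x)$. Since $u$ is real-valued on $\mathbb{R}^n\setminus\{0\}$, so is $v_\infty$. Setting $h:= u - v_\infty \geq 0$, the pointwise identity $(-\Delta)^{\alpha/2} v_\infty = f(\cdot,u)$ (justified by differentiation under the integral, using $(f_2)$ near $0$ and $u\in\mathcal{L}_\alpha$ at infinity) shows that $h$ is $\alpha$-harmonic on $\mathbb{R}^n\setminus\{0\}$; the isolated singularity at $0$ is removable thanks to the local integrability of $v_\infty$ there. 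A Liouville theorem for non-negative $\alpha$-harmonic functions in $\mathcal{L}_\alpha(\mathbb{R}^n)$ then forces $h \equiv C$ for some constant $C \geq 0$.

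It remains to rule out $C>0$. If $C>0$, then $u(y) \geq C$ everywhere, and $(f_3)$ provides a constant $c_0>0$ with $f(y,u(y)) \geq c_0 C^p |y|^a$ on the cone $\mathcal{C}$, so
\begin{equation*}
v_\infty(x) \;\geq\; c_0 C^p C_{n,\alpha}\int_{\mathcal{C}} \frac{|y|^a}{|x-y|^{n-\alpha}}\, \intd y = +\infty
\end{equation*}
at every $x$, because $a>-\alpha$ makes the integrand fail to be integrable at infinity on any fixed cone. This contradicts $v_\infty(x)<\infty$, so $C=0$ and $u \equiv v_\infty$, which is (\ref{main IE}). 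The main obstacle in this program is the rigorous verification of the identity $(-\Delta)^{\alpha/2}v_\infty = f(\cdot,u)$ together with the removability of the singularity at $0$; here the interplay of $\sigma<\alpha$ in $(f_2)$ with the order of the operator is precisely what keeps $|y|^{-\sigma}f(y,u(y))$ tame enough near the origin for the pointwise and distributional fractional Laplacians of the Riesz potential to agree.
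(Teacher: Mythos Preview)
Your proposal is correct and follows essentially the same route as the paper: Green-function truncation on $B_R$, the maximum principle to obtain $u\geq v_R$, monotone passage to the Riesz potential $v_\infty$, the Liouville theorem to identify $u-v_\infty$ with a non-negative constant $C$, and the divergence (from $(f_3)$ and $a>-\alpha$) of $\int_{\mathcal{C}}|y|^{a}/|x-y|^{n-\alpha}\,dy$ at infinity to force $C=0$. The paper disposes of the removable singularity at the origin---both for $w_R$ and for $h=u-v_\infty$---by citing Lemma~2.2 of \cite{dai2023liouville}, which is exactly the technical point you flag as the main obstacle.
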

One can check that if $u$ solves the IE then it solves the PDE. Thus we only check that any solution of the PDE is also a solution of IE. The idea of proof basically comes from \cite{dai2023liouville}, Theorem 2.1.
\begin{proof}
    For any $R>0$, let
    \begin{equation*}
        v_R(x)=\int_{B_R}G_R^\alpha(x,y)f(y,u(y))dy,
    \end{equation*}
    where $B_R=\{x\in\mathbb{R}^n\mid |x|<R\}$, and $G_R^\alpha(x,y)$ is the Green function for Laplacian $(-\Delta)^\frac{\alpha}{2}$ with $0<\alpha\leq 2$ in $B_R$, i.e.
    \begin{equation*}
        G_R^\alpha(x,y)=\begin{cases}
            \frac{C_{n,\alpha}}{|x-y|^{n-\alpha}}\int_0^\frac{(R^2-|x|^2)(R^2-|y|^2)}{R^2|x-y|^2} \frac{b^{\frac{\alpha}{2}-1}}{(1+b)^\frac{n}{2}} db,\;\;\;\;& x,y\in B_R,\\
            0,&x\text{ or }y\in B_R^c.
        \end{cases}
    \end{equation*}
By assumption $(f_3)$, we have $v_R\in C_{loc}^{1,1}(B_R\backslash\{0\})\cap C(\mathbb{R}^n)\cap \mathcal{L}_\alpha(\mathbb{R}^n)$ ($v_R\in C^2(B_R\backslash\{0\})\cap C(\mathbb{R}^n)$ if $\alpha=2$), and
\begin{equation*}
    \begin{cases}
        (-\Delta)^\frac{\alpha}{2} v_R(x)=f(x,u(x)),\;\;\;\;&x\in B_R\backslash\{0\},\\
        v_R(x)=0, &x\in B_R^c.
    \end{cases}
\end{equation*}
Let $w_R(x)=u(x)-v_R(x)$.According to Lemma 2.2 in \cite{dai2023liouville}, we can establish that
\begin{equation*}
\begin{cases}
    (-\Delta)^\frac{\alpha}{2} w_R(x)=0,\;\;\;\;&x\in B_R,\\
        w_R(x)\geq 0, &x\in B_R^c.
\end{cases}
\end{equation*}
According to the maximum principle for Laplacians and fractional Laplacians (as discussed in \cite{silvestre} and also in Theorem 2.1 of \cite{chen2}), we can conclude that
\begin{equation*}
    w_R(x)=u(x)-v_R(x)\geq 0,\;\;\;\;\forall x\in\mathbb{R}^n.
\end{equation*}
Fixing $x$ and letting $R\to \infty$, we have
\begin{equation*}
    u(x)\geq C_{n,\alpha}\int_{\mathbb{R}^n}\frac{f(y,u(y))}{|x-y|^{n-\alpha}} dy\geq 0.
\end{equation*}
Let $x=0$. By assumption $(f_3)$, 
\begin{equation}\label{eq 20}
   +\infty>u(0)\geq C_{n,\alpha}\int_{\mathbb{R}^n}\frac{f(y,u(y))}{|y|^{n-\alpha}} dy \geq C_{n,\alpha}\int_{\mathcal{C}}\frac{u^p(y)}{|y|^{n-\alpha-a}} dy.
\end{equation}
Let $v(x)=C_{n,\alpha}\int_{\mathbb{R}^n}\frac{f(y,u(y))}{|x-y|^{n-\alpha}} dy +C\geq C\geq 0.$. One can see $v(x)$ also solves
\begin{equation*}
    (-\Delta)^\frac{\alpha}{2} v(x)= f(x,u(x)) \;\;\;\;\forall x\neq 0.
\end{equation*}
Let $w(x)=u(x)-v(x)$. By \ Lemma 2.2 in \cite{dai2023liouville},
\begin{equation*}
\begin{cases}
    (-\Delta)^\frac{\alpha}{2} w(x)=0,\;\;\;\;&x\in \mathbb{R}^n,\\
        w(x)\geq 0, &x\in \mathbb{R}^n.
\end{cases}
\end{equation*}
By Liouville Theorem for Laplacians and fractional Laplacians (see Theorem 1 in \cite{zhuo2014liouville}),
we derive that
\begin{equation}\label{eq 19}
    u(x)=C_{n,\alpha}\int_{\mathbb{R}^n}\frac{f(y,u(y))}{|x-y|^{n-\alpha}} dy +C\geq C\geq 0.
\end{equation}
Combining (\ref{eq 19}) and (\ref{eq 20}), it follows that
\begin{equation*}
     C_{n,\alpha}\int_{\mathcal{C}}\frac{C^p}{|y|^{n-\alpha-a}} dy<+\infty.
\end{equation*}
This result implies that $C=0$ since $\mathcal{C}$ is an infinite cone. Therefore, (\ref{eq 19}) implies that
\begin{equation*}
    u(x)=C_{n,\alpha}\int_{\mathbb{R}^n}\frac{f(y,u(y))}{|x-y|^{n-\alpha}} dy.
\end{equation*}
Therefore, $u$ also solves the IE (\ref{main IE}).
\end{proof}

Moreover, we can derive a lower bound estimate of $u$ from the equivalent IE. To this end, we can see that for any $|x|\geq 1$,
\begin{equation}\label{lb 1}
    \begin{aligned}
    u(x) &\geq \frac{C_{n,\alpha}}{|x|^{n-\alpha}}\int_{\overline{B_{1/2}}\cap \mathcal{C}} f(y,u(y))dy\\
    &\overset{(f_3)}{\geq} \frac{C_{n,\alpha}}{|x|^{n-\alpha}}\int_{\overline{B_{1/2}}\cap \mathcal{C}} |y|^au^p(y) dy\\
    &\geq \frac{C_{n,\alpha}}{|x|^{n-\alpha}}.
\end{aligned}
\end{equation}

\section{Proof of Theorem \ref{main thm1}}

Next we will apply the \textit{method of scaling spheres }to give the following lower bounds for positive $u$, which will lead a contraction to IE (\ref{main IE}).

\begin{theorem}\label{main thm2}
      Assume $n>\alpha$, $0<\alpha\leq 2$, $0<p<p_c(a):=\frac{n+\alpha+2a}{n-\alpha}$. Suppose $f$ is subcritical and $f$ satisfies the assumption $(f_1)$, $(f_2)$ and $(f_3)$. Suppose $u$ is a non-trivial solution of (\ref{main PDE}). Then we have the following lower bound estimates.
\begin{enumerate}
    \item If $0<p<1$, then $u(x)\geq C_k|x|^k$ for any $ |x|\geq 1$ and any $k<\frac{\alpha+a}{1-p}$;
    \item If $1\leq p<p_c(a)=\frac{n+\alpha+a}{n-\alpha}$, then $u(x)\geq C_k|x|^k$ for any $ |x|\geq 1$ and any $k<+\infty$.
\end{enumerate}
\end{theorem}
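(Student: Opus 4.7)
By Theorem~\ref{equiv thm}, the nontrivial nonnegative solution $u$ of (\ref{main PDE}) also solves (\ref{main IE}), and the elementary estimate (\ref{lb 1}) already supplies the crude bound $u(x)\geq C|x|^{-(n-\alpha)}$ on $\{|x|\geq 1\}$. The plan is to improve this in two stages: first, run the direct method of scaling spheres on the integral equation to produce a better preliminary lower bound; then, bootstrap that bound through (\ref{main IE}) itself to reach the exponents in the statement.

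\textbf{Scaling spheres on (\ref{main IE}).} For each $\lambda>0$ introduce the Kelvin transform $u_\lambda(x):=(\lambda/|x|)^{n-\alpha}u(x^\lambda)$ with $x^\lambda:=\lambda^2x/|x|^2$, and the deficit $w_\lambda:=u-u_\lambda$. From (\ref{main IE}) one derives an integral identity for $w_\lambda$ whose sign on $\{|x|\geq\lambda\}$ is governed by the subcriticality $(f_0)$ together with monotonicity $(f_1)$. I would then carry out the usual two-step procedure: (i) \emph{start the scaling} for small $\lambda$ by invoking the narrow region principle (Theorem~\ref{thm narrow}) to force $w_\lambda\geq 0$ on $\{|x|\geq\lambda\}$; (ii) \emph{scale to infinity} by setting $\bar\lambda:=\sup\{\lambda_0>0:w_\mu\geq 0 \text{ on } \{|x|\geq\mu\} \text{ for every } \mu\leq\lambda_0\}$, assuming $\bar\lambda<+\infty$ for contradiction, using continuity to obtain $w_{\bar\lambda}\geq 0$, invoking the strict form of $(f_0)$ to upgrade this to $w_{\bar\lambda}>0$ on $\{|x|>\bar\lambda\}$, and finally applying the narrow region principle once more on a thin shell $\bar\lambda\leq|x|\leq\bar\lambda+\delta$ to contradict maximality. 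The resulting conclusion $\bar\lambda=+\infty$, rewritten via $|x|=\lambda^2/|y_0|$ with $|y_0|$ of unit order, yields the improved preliminary estimate $u(x)\geq C|x|^{-(n-\alpha)/2}$ on $\{|x|\geq 1\}$.

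\textbf{Bootstrap through (\ref{main IE}).} Given any provisional lower bound $u(y)\geq c|y|^{\mu}$ on $\mathcal{C}\cap\{|y|\geq 1\}$, I would insert it into (\ref{main IE}), apply $(f_3)$, and restrict integration to the annular cone $\{1\leq|y|\leq|x|/2\}\cap\mathcal{C}$, on which $|x-y|\leq 2|x|$, obtaining
\[
u(x)\geq C|x|^{-(n-\alpha)}\int_1^{|x|/2} r^{n-1+a+p\mu}\,dr \geq C'|x|^{\alpha+a+p\mu}
\]
as long as $n+a+p\mu>0$. Setting $T(\mu):=\alpha+a+p\mu$ and iterating $\mu_{k+1}=T(\mu_k)$ from $\mu_0=-(n-\alpha)/2$: when $0<p<1$, $T$ is a strict contraction with fixed point $\mu^\ast=(\alpha+a)/(1-p)$, so $\mu_k\uparrow\mu^\ast$ and statement~1 follows; when $1\leq p<p_c(a)$, the slope of $T$ is at least $1$, and once $\mu_k$ is sufficiently positive the sequence escapes to $+\infty$, yielding statement~2.

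\textbf{Main obstacle.} The hardest step will be closing the scaling at the critical radius $\bar\lambda$ in step (ii). One must exploit the \emph{strict} form of subcriticality in $(f_0)$ (non-strict criticality would leave open the possibility $w_{\bar\lambda}\equiv 0$) together with a delicate narrow-region argument on the shell just outside $\{|x|=\bar\lambda\}$, while simultaneously controlling the possibly singular behavior of $f(\cdot,u)$ at the origin through $(f_2)$ so that the integrals defining $w_\lambda$ and their derivatives in $\lambda$ remain well behaved uniformly near $\bar\lambda$. A secondary technical point is keeping $n+a+p\mu_k>0$ throughout the bootstrap, which is precisely where the upper bound $p<p_c(a)$ is invoked.
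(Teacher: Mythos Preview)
Your plan is correct and follows essentially the same route as the paper's proof: run scaling spheres to reach the preliminary bound $u(x)\geq C|x|^{-(n-\alpha)/2}$ on $\{|x|\geq 1\}$, then bootstrap via (\ref{main IE}) through the iteration $\mu_{k+1}=p\mu_k+(\alpha+a)$ (equivalently, the paper's $\mu_{k+1}=p\mu_k-(a+\alpha)$ with opposite sign convention). Two cosmetic discrepancies are worth flagging. First, you set up the comparison on the exterior $\{|x|\geq\lambda\}$ with $w_\lambda=u-u_\lambda$, whereas the paper works on the interior $B_\lambda\setminus\{0\}$ with $w^\lambda=u_\lambda-u$; these are equivalent by the anti-symmetry $(w^\lambda)_\lambda=-w^\lambda$, but note that Theorem~\ref{thm narrow} is stated for the \emph{inside} annulus $A_{\lambda,l}=\{\lambda-l<|x|<\lambda\}$, so you will either need to restate it for an outer shell or simply pull your argument back inside via Kelvin reflection. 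Second, although you advertise the scaling step as being run on the integral equation, Theorem~\ref{thm narrow} is a PDE/narrow-region statement via $(-\Delta)^{\alpha/2}w^\lambda$; the paper in fact uses a hybrid---the PDE narrow region principle for the start and continuation steps, and the integral identity derived from (\ref{main IE}) only to secure the strict positivity $w^{\lambda_0}>0$ needed to rule out $w^{\lambda_0}\equiv 0$---which is exactly what your ``main obstacle'' paragraph is anticipating. Your bootstrap annulus $\{1\leq|y|\leq|x|/2\}\cap\mathcal{C}$ differs from the paper's $\{2|x|\leq|y|\leq 4|x|\}\cap\mathcal{C}$, but both yield the same recursion, and your observation that $p<p_c(a)$ is precisely what makes the first iterate improve on $\mu_0=-(n-\alpha)/2$ is spot on.
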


\begin{proof}
    For any $\lambda>0$, define the Kelvin transform of $u$ by
    \begin{equation}\label{eq 7}
        u_\lambda(x)=\left(\frac{\lambda}{|x|}\right)^{n-\alpha} u\left(\frac{\lambda^2x}{|x|^2}\right)\;\;\;\;\forall x\neq 0.
    \end{equation}
    One may verify that $u_\lambda\in \mathcal{L}_\alpha(\mathbb{R}^n)\cap C_{loc}^{1,1}(\mathbb{R}^n_+)\cap C(\overline{\mathbb{R}^n_+}\backslash\{0\})$ for $0<\alpha<2$ and $u_\lambda\in C^{2}(\mathbb{R}^n_+)\cap C(\overline{\mathbb{R}^n_+}\backslash\{0\})$ for $\alpha=2$. It is well-known that
    \begin{equation}\label{eq 1}
        (-\Delta)^\frac{\alpha}{2}u_\lambda(x)=\left(\frac{\lambda}{|x|}\right)^{n+\alpha} ((-\Delta)^\frac{\alpha}{2}u)\left(\frac{\lambda^2x}{|x|^2}\right).
    \end{equation}
    Combine (\ref{eq 1}) and the assumption that f is subcritical, we deduce that for any $|x|<\lambda$,
    \begin{equation}\label{eq 2}
    \begin{aligned}
                (-\Delta)^\frac{\alpha}{2}u_\lambda(x)=\left(\frac{\lambda}{|x|}\right)^{n+\alpha} f\left(\frac{\lambda^2x}{|x|^2},u\left(\frac{\lambda^2x}{|x|^2}\right)\right) &= \left(\frac{\lambda}{|x|}\right)^{n+\alpha} f\left(\frac{\lambda^2}{|x|^2}x,\left(\frac{\lambda}{|x|}\right)^{-(n-\alpha)}u_\lambda(x)\right)\\
        &>f(x,u_\lambda(x)).
    \end{aligned}
    \end{equation}
    Before applying the method of scaling sphere, we give some definitions. Define $B_\lambda=\{x:|x|<\lambda\}$ and 
    \begin{equation*}
        w^\lambda(x)=u_\lambda(x)-u(x)\;\;\;\;\forall x\in B_\lambda\backslash\{0\}.
    \end{equation*}
By direct calculation, one can check that $w^\lambda$ is spherically anti-symmetric, i.e. 
\begin{equation*}
    w^\lambda(x)=-(w^\lambda)_\lambda(x).
\end{equation*}
The process of scaling a sphere can be broken down into two steps.

\textit{Step 1.} We will show that for $\lambda>0$ sufficiently small, it holds that $w_\lambda\geq 0$ for any $x\in B_\lambda\backslash\{0\}$.

Define 
\begin{equation*}
    B_\lambda^-=\{x\in B_\lambda\backslash\{0\}\mid w^\lambda(x)<0\}.
\end{equation*}
We will show through contradiction arguments that in fact $B_\lambda^-=\emptyset$ for $\lambda>0$ sufficiently small, which finishes the proof of \textit{Step 1.}

Suppose $B_\lambda^-\neq\emptyset$. For any $x\in B_\lambda$, by (\ref{eq 2}), we deduce that
\begin{equation*}
    (-\Delta)^\frac{\alpha}{2}w^\lambda(x)=(-\Delta)^\frac{\alpha}{2}u_\lambda(x)-(-\Delta)^\frac{\alpha}{2}u(x)>f(x,u_\lambda(x))-f(x,u(x))=c_\lambda(x)w^\lambda(x),
\end{equation*}
where 
\begin{equation*}
    c_\lambda(x):=\frac{f(x,u_\lambda(x))-f(x,u(x))}{u_\lambda(x)-u(x)}.
\end{equation*}
By $(f_1)$, it follows that $c_\lambda(x)\geq 0$ for any $x\in B_\lambda^-$. Since $0<u_\lambda(x)<u(x)\leq \sup_{B_\lambda} u<+\infty$, we use $(f_2)$ to get that
\begin{equation}
    c_\lambda(x)=|x|^{-\sigma} \frac{|x|^\sigma f(x,u_\lambda)-|x|^\sigma f(x,u)}{u_\lambda-u} \leq L_\lambda |x|^{-\sigma}\;\;\;\;\forall x\in B_\lambda^-,
\end{equation}
where $L_\lambda>0$ is dependent of $\lambda$. Thus
\begin{equation}\label{eq 5}
    (-\Delta)^\frac{\alpha}{2}w^\lambda(x)\geq L_\lambda |x|^{-\sigma}w^\lambda(x)\;\;\;\;\forall x\in B_\lambda^-.
\end{equation}
To show that $B_\lambda^-=\emptyset$, we need the following narrow region principle, and the idea comes from \cite{chen2017direct,dai2023liouville,cheng2017maximum,berestycki1991method,berestycki1988monotonicity}.
\begin{theorem}[Narrow region principle]\label{thm narrow}
    Assume $n>\alpha$, $0<\alpha\leq 2$ and $1\leq p<+\infty$. Let $\lambda>0$ and $A_{\lambda,l}=\{x\in\mathbb{R}^n\mid \lambda-l<|x|<\lambda\}$ be an annulus with thickness $l\in(0,\lambda)$. Let $B_\lambda^-$ be as above. Suppose $w^\lambda \in\mathcal{L}_\alpha(\mathbb{R}^n)\cap C_{loc}^{1,1}(A_{\lambda,l})$ for $0<\alpha<2$ and $w^\lambda \in C^2(A_{\lambda,l})$ for $\alpha=2$ and satisfies
    \begin{enumerate}
        \item\label{assump 1} $(-\Delta)^\frac{\alpha}{2}w^\lambda(x)-c_\lambda(x)w^\lambda(x)\geq 0$ in $A_{\lambda,l}\cap B_\lambda^-$, where $0\leq c_\lambda(x)\leq L_\lambda |x|^{-\sigma}$ with $\sigma<\alpha$;
        \item negative minimum of $w^\lambda$ is attained in $A_{\lambda,l}$.
    \end{enumerate}
    Then we have
    \begin{enumerate}
        \item[(i)] there exists a sufficiently small $\delta_0>0$ such that, if $0<\lambda\leq \delta_0$, then
        \begin{equation}\label{narrow eq 1}
            w^\lambda(x)\geq 0\;\;\;\;\forall x\in A_{\lambda,l};
        \end{equation}
        \item[(ii)] for arbitrarily fixed $\theta\in(0,1)$, there exists a sufficiently small $l_0>0$ depending on $\lambda$ continuously such that, if $0<l\leq l_0$ and $0<l\leq \theta \lambda$, then
        \begin{equation}\label{narrow eq 2}
            w^\lambda(x)\geq 0\;\;\;\;\forall x\in A_{\lambda,l}.
        \end{equation}
    \end{enumerate}
\end{theorem}

\begin{proof}
    Suppose on the contrary that (\ref{narrow eq 1}) and (\ref{narrow eq 2}) do not hold. In other words, there exists a small $\delta_0$, $\lambda\in(0,\delta_0)$, and $x_1\in A_{\lambda,l}$ such that $w^\lambda(x_1)<0$. Also, for sufficiently small $l_0(\lambda)$, which depends continuously on $\lambda$, there exists $l\in(0,l_0(\lambda))$ and $x_2\in A_{\lambda,l}$ such that $w^\lambda(x_2)<0$.
    
    By the same process in \cite{dai2023liouville}, Theorem 2.8, it follows that there exists $\Tilde{x}\in A_{\lambda,l}\cap B_\lambda^-$ such that
    \begin{equation}\label{eq 3}
        (-\Delta)^\frac{\alpha}{2}w^\lambda (\Tilde{x}) \leq \frac{C}{l^\alpha}w^\lambda (\Tilde{x})<0
    \end{equation}
    for $0<\alpha\leq 2$.
    (\ref{eq 3}) and assumption \ref{assump 1} yield that
    \begin{equation}\label{eq 4}
        L_\lambda|\Tilde{x}|^{-\sigma}\geq c_\lambda(\Tilde{x}) \geq \frac{C}{l^\alpha}.
    \end{equation}
    We can derive from (\ref{eq 4}) that for $\sigma\in(0,\alpha)$,
    \begin{equation*}
        C\leq L_\lambda\frac{l^\alpha}{|\Tilde{x}|^{\sigma}}\leq L_\lambda\frac{\lambda^\alpha}{|\lambda-l|^{\sigma}},
    \end{equation*}
    and for $\sigma\in(-\infty,0]$, 
    \begin{equation*}
        C \leq L_\lambda\frac{l^\alpha}{|\Tilde{x}|^{\sigma}} \leq L_\lambda\lambda^{\alpha-\sigma}.
    \end{equation*}
    In both cases, we encounter a contradiction when $l$ is fixed and $\lambda$ is sufficiently small. Thus, (i) is verified. 
    
Similarly, 
    \begin{equation*}
        \begin{aligned}
            C\leq L_\lambda\frac{l^\alpha}{((1-\theta)\lambda)^\sigma}\;\;\;\;\text{for }\sigma\in(0,\alpha),\\
            C\leq L_\lambda\frac{l^\alpha}{\lambda^\sigma}\;\;\;\;\text{for }\sigma\in(-\infty,0].
        \end{aligned}
    \end{equation*}
    And we get a contraction when $l_0$ is sufficiently small and $l\in(0,\min\{\theta\lambda,l_0\})$. This verifies (ii).
\end{proof}
We claim that there exists a sufficiently small $\eta_0>0$ such that the following argument holds: if $0<\lambda<\eta_0$, then there exists a sufficiently small $\theta>0$ such that
\begin{equation}\label{eq 23}
    w^\lambda(x)\geq 1,\;\;\;\;\forall x\in \overline{B_{\theta\lambda}}\backslash\{0\}.
\end{equation}
In fact, we hold $\lambda$ fixed and choose $\theta > 0$ small enough such that $\frac{\lambda^2x}{|x|^2}$ attains large values. Thus by the estimate (\ref{lb 1}), for any $x\in \overline{B_{\theta\lambda}}\backslash\{0\}$, we have
\begin{equation*}
    u_\lambda(x)=(\frac{\lambda}{|x|})^{n-\alpha} u(\frac{\lambda^2x}{|x|^2}) \geq C (\frac{\lambda}{|x|})^{n-\alpha} (\frac{|x|}{\lambda^2})^{n-\alpha}=\frac{C}{\lambda^{n-\alpha}}.
\end{equation*}
Then for $\lambda$ sufficiently small, it holds that
\begin{equation*}
    w^\lambda(x)\geq \frac{C}{\lambda^{n-\alpha}}-C\geq 1.
\end{equation*}
As a result, (\ref{eq 23}) holds.

Now let $\epsilon_0=\min\{\delta_0, \eta_0\}$, $l=(1-\theta)\lambda$. For $0<\lambda\leq \epsilon_0$, 
(\ref{eq 5}) and (\ref{eq 23}) imply that the assumptions in Theorem \ref{thm narrow} hold. Consequently, by Theorem \ref{thm narrow} (i) we conclude that
\begin{equation*}
    w^\lambda\geq 0\;\;\;\;\text{in }A_{\lambda,l}.
\end{equation*}
Hence,
\begin{equation*}
    B_\lambda^-=\emptyset\;\;\;\;\text{for $\lambda>0$ sufficiently small}.
\end{equation*}
This completes \textit{Step 1.}

\textit{Step 2.} We will dilate the sphere $\{x\in\mathbb{R}^n\mid |x|=\lambda\}$ outward until $\lambda=+\infty$ to derive lower bound estimates on $u$.


In fact, \textit{Step 1} provides a starting point to carry out the method of scaling spheres. Then we will continuously increase $\lambda$ as long as
\begin{equation*}
    w_\lambda\geq 0\;\;\;\;\text{in }B_\lambda\backslash\{0\}.
\end{equation*}
Define 
\begin{equation*}
    \lambda_0=\sup\{\lambda>0\mid w^\mu\geq 0 \text{ in }B_\mu\backslash\{0\},\;\forall 0<\mu\leq \lambda\}.
\end{equation*}
It implies that
\begin{equation*}
    w^{\lambda_0}\geq 0\;\;\;\;\forall x\in B_{\lambda_0}\backslash\{0\}.
\end{equation*}
We will show that $\lambda_0=+\infty$. 

Suppose, to the contrary, that $\lambda_0<+\infty$. To establish a contradiction, we will prove the following claim:
\begin{equation}\label{eq 8}
     w^{\lambda_0}\equiv 0\;\;\;\;\forall x\in B_{\lambda_0}\backslash\{0\}.
\end{equation}

We use a contradiction argument to show the claim. Suppose, to the contrary,  that (\ref{eq 8}) does not hold. Then $w^{\lambda_0}$ is nonnegative in $B_{\lambda_0}\backslash\{0\}$, and there exists $x_0\in B_{\lambda_0}\backslash\{0\}$ such that $w^{\lambda_0}(x_0)>0$. We will first prove that 
\begin{equation}\label{eq 11}
    w^{\lambda_0}>0 \;\;\;\;\text{in } B_{\lambda_0}\backslash\{0\}, 
\end{equation}
and then apply Theorem \ref{thm narrow} to derive a contradiction.

Since the solution $u$ of PDE (\ref{main PDE}) also satisfies IE (\ref{main IE}), we can get through direct calculations that
\begin{equation}\label{eq 9}
    u(x)=\int_{B_{\lambda_0}} \frac{f(y,u(y))}{|x-y|^{n-\alpha}}+ \frac{f \left(\frac{\lambda_0^2}{|y|^2}y, u(\frac{\lambda_0^2}{|y|^2}y) \right)}{|\frac{|y|}{\lambda_0}x-\frac{\lambda_0}{|y|}y|^{n-\alpha}} dy.
\end{equation}
Apply (\ref{main IE}) and the equality $|\frac{x}{|x|^2}-\frac{z}{|z|^2}|=\frac{|x-z|}{|x||z|}$, we can derive that
\begin{equation*}
    u_{\lambda_0}(x)=\int_{\mathbb{R}^n} \frac{f \left(\frac{\lambda_0^2}{|y|^2}y, u(\frac{\lambda_0^2}{|y|^2}y) \right)}{|x-y|^{n-\alpha}} \left(\frac{\lambda_0}{|y|}\right) ^{n+\alpha} dy.
\end{equation*}
Using a similar calculation, we obtain
\begin{equation}\label{eq 10}
     u_{\lambda_0}(x)=\int_{B_{\lambda_0}} \frac{f(y,u(y))}{|\frac{|y|}{\lambda_0}x-\frac{\lambda_0}{|y|}y|^{n-\alpha}}+ \left(\frac{\lambda_0}{|y|}\right) ^{n+\alpha} \frac{f \left(\frac{\lambda_0^2}{|y|^2}y, u(\frac{\lambda_0^2}{|y|^2}y) \right)}{|x-y|^{n-\alpha}} dy.
\end{equation}
Combine (\ref{eq 9}) and (\ref{eq 10}), we obtain that for any $x\in B_{\lambda_0}\backslash\{0\}$,
\begin{equation*}
    w^{\lambda_0}(x)= \int_{B_{\lambda_0}} \left( \frac{1}{|x-y|^{n-\alpha}}- \frac{1}{|\frac{|y|}{\lambda_0}x-\frac{\lambda_0}{|y|}y|^{n-\alpha}}\right) \left( \left(\frac{\lambda_0}{|y|}\right)^{n+\alpha} f \left(\frac{\lambda_0^2}{|y|^2}y, u(\frac{\lambda_0^2}{|y|^2}y) \right)-f(y,u(y)) \right)dy.
\end{equation*}
One may check that 
\begin{equation*}
    \frac{1}{|x-y|^{n-\alpha}}- \frac{1}{|\frac{|y|}{\lambda_0}x-\frac{\lambda_0}{|y|}y|^{n-\alpha}}>0\;\;\;\;\;\forall x,y\in B_{\lambda_0}\backslash\{0\}.
\end{equation*}
Since $f(x,u(x))$ is subcritical and satisfies assumption $(f_1)$, it follows that
\begin{equation}\label{eq 16}
    w^{\lambda_0}(x)> \left( \frac{1}{|x-y|^{n-\alpha}}- \frac{1}{|\frac{|y|}{\lambda_0}x-\frac{\lambda_0}{|y|}y|^{n-\alpha}}\right) \big( f(y,u_{\lambda_0}(y))-f(y,u(y))\big) dy\geq 0\;\;\;\;\forall x\in B_{\lambda_0}\backslash\{0\}.
\end{equation}
Thus (\ref{eq 11}) is verified.

Next, we claim that there exists $\epsilon>0$ such that $w^\lambda(x)\geq0$ in $B_\lambda\backslash\{0\}$ for all $\lambda\in[\lambda_0,\lambda_0+\epsilon)$. Consequently, this implies that the sphere can be dilated outward slightly more than $\lambda_0$, which contradicts the definition of $\lambda_0$.

Define 
\begin{equation*}
    \Tilde{l_0}=\min\{\theta\lambda,\min_{\lambda\in[\lambda_0,2\lambda_0]} l_0(\lambda)\}>0,
\end{equation*}
where $l_0(\lambda)$ and $\theta$ are given in Theorem \ref{thm narrow} (ii). For a fixed small $0<r_0<\frac{1}{2}\min\{\Tilde{l_0},\lambda_0\}$, define 
\begin{equation*}
    m_0=\inf_{x\in\overline{B_{\lambda_0-r_0}}\backslash\{0\}} w^{\lambda_0}(x).
\end{equation*}
By (\ref{eq 11}), $m_0>0$. Using the same uniform-continuity arguments as in the proof of Theorem 1.3, Step 2 in \cite{dai2023liouville}, we can establish the existence of a sufficiently small $\epsilon_1$ in the interval $(0, \frac{1}{2}\min{\Tilde{l_0},\lambda_0})$. This small value of $\epsilon_1$ ensures that for all $\lambda$ in the range $[\lambda_0, \lambda_0+\epsilon_1]$, the following inequality holds:
\begin{equation}\label{eq 14}
w^\lambda(x)\geq \frac{m_0}{2}>0\;\;\;\;\forall 
 0<|x|\leq \lambda_0-r_0.
\end{equation}
That means 
\begin{equation}\label{eq 12}
    w^\lambda \text{ attains negative minimum in } A_{\lambda,l},\;\text{where }l=\lambda-\lambda_0+r_0.
\end{equation}
For any $\lambda\in[\lambda_0, \lambda_0+\epsilon_1]$, let $l=\lambda-\lambda_0+r_0$. Then $l\in(0,\Tilde{l_0})$. By combining (\ref{eq 12}) and (\ref{eq 5}), one can see that the assumptions in Theorem \ref{thm narrow} are satisfied. Thus,
\begin{equation}\label{eq 15}
    w^\lambda\geq 0,\;\;\;\;\forall x\in A_{\lambda,l}=\{x\in\mathbb{R}^n\mid \lambda_0-r_0<|x|<\lambda\}.
\end{equation}
(\ref{eq 14}) and (\ref{eq 15}) imply that for any $\lambda\in [\lambda_0, \lambda_0+ \epsilon_1]$,
\begin{equation*}
     w^\lambda\geq 0,,\;\;\;\;\forall x\in B_\lambda\backslash\{0\},
\end{equation*}
which contradicts with the definition of $\lambda_0$. As a result, (\ref{eq 8}) is satisfied, implying that
\begin{equation*}
    w^{\lambda_0}\equiv 0\;\;\;\;\forall x\in B_{\lambda_0}\backslash\{0\}.
\end{equation*}
However, this contradicts (\ref{eq 16}). Therefore, we can conclude that
\begin{equation*}
    \lambda_0=+\infty.
\end{equation*}
That is, for all $\lambda\in(0,+\infty)$, 
\begin{equation*}
    u(x)\leq \left( \frac{\lambda}{|x|} \right)^{n-\alpha}u(\frac{\lambda^2x}{|x|^2})\geq u(x),\;\;\;\;\forall 0<|x|\leq \lambda.
\end{equation*}
And it is equivalent to that for all $\lambda\in(0,+\infty)$
\begin{equation*}
    u(x)\geq \left( \frac{\lambda}{|x|} \right)^{n-\alpha}u(\frac{\lambda^2x}{|x|^2}) ,\;\;\;\;\forall|x|\geq \lambda.
\end{equation*}
For $|x|\geq 1$, let $\lambda=\sqrt{x}$. Then 
\begin{equation*}
    u(x)\geq \frac{1}{|x|^\frac{n-\alpha}{2}} u(\frac{x}{|x|}),\;\;\;\;\forall|x|\geq 1.
\end{equation*}
So,
\begin{equation}\label{eq 17}
    u(x)\geq (\min_{|x|=1}u(x))\frac{1}{|x|^\frac{n-\alpha}{2}} =\frac{C_0}{|x|^\frac{n-\alpha}{2}},\;\;\;\;\forall|x|\geq 1.
\end{equation}
Let $\mu_0=\frac{n-\alpha}{2}$. The assumption $(f_3)$ and (\ref{eq 17}) yield that for any $|x|\geq 1$,
\begin{equation}\label{eq 18}
\begin{aligned}
        u(x) & = \int_{\mathbb{R}^n}\frac{f(y,u(y))}{|x-y|^{n-\alpha}} dy\\
    &\geq C\int_{\{ 2|x|\leq |y| \leq 4|x|\}\cap \mathcal{C}} \frac{|y|^au^p(y)}{|x-y|^{n-\alpha}} dy\\
    &\geq \frac{C}{|x|^{n-\alpha}} \int_{\{ 2|x|\leq |y| \leq 4|x|\}\cap \mathcal{C}}\frac{1}{|y|^{p\mu_0-\alpha}} dy\\
    &\geq \frac{C}{|x|^{n-\alpha}} \int_{\{ 2|x|\leq |y| \leq 4|x|\}} \frac{dy}{|y|^{p\mu_0-\alpha}}\\
    &\geq \frac{C_1}{|x|^{p\mu_0-(a+\alpha)}}.
\end{aligned}
\end{equation}
Let $\mu_1=p\mu_0-(a+\alpha)$. Since $0<p<p_c(a):=\frac{n+\alpha+2a}{n-\alpha}$, it follows that $\mu_1<\mu_0$. Thus, we have obtained a better lower bound estimate than (\ref{eq 17}) after one iteration. For $k=0,1,2,...$, define
\begin{equation*}
    \mu_{k+1}=p\mu_k-(a+\alpha).
\end{equation*}
Since $p<\frac{n+\alpha+2a}{n-\alpha}$, one can see that $\{\mu_k\}$ is a decreasing sequence, and
\begin{equation*}
\begin{aligned}
      & \mu_k\to -\frac{a+\alpha}{1-p} \;\;\;\;\text{if }0<p<1;\\
   & \mu_k\to -\infty\;\;\;\;\text{if }1\leq p<\frac{n+\alpha+2a}{n-\alpha}. 
\end{aligned}
\end{equation*}
Thus, continuing the iteration process as (\ref{eq 18}), we have the following lower bound estimates:
\begin{enumerate}
    \item If $0<p<1$, then $u(x)\geq C_k|x|^k$ for any $ |x|\geq 1$ and any $k<\frac{\alpha+a}{1-p}$;
    \item If $1\leq p<p_c(a)=\frac{n+\alpha+a}{n-\alpha}$, then $u(x)\geq C_k|x|^k$ for any $ |x|\geq 1$ and any $k<+\infty$.
\end{enumerate} 
This finishes Theorem \ref{main thm2}.
\end{proof}
Now we choose $\Tilde{x}\in\mathcal{C},
\;|\Tilde{x}|=1$, where $\mathcal{C}$ is give in assumption $(f_3)$. One can see that the lower bound estimates in Theorem \ref{main thm2} actually contradicts the following intergrability indicated by IE (\ref{main IE}), that is,
\begin{equation*}
\begin{aligned}
    +\infty>u(\Tilde{x})&\geq C\int_{\{|y| \geq 2\}\cap \mathcal{C}}\frac{|y|^au^p(y)}{|\Tilde{x}-y|^{n-\alpha}} dy\\
&    \geq C\int_{\{|y|\geq 2\}\cap \mathcal{C}}\frac{u^p(y)}{|y|^{n-\alpha-a}} dy.
\end{aligned}
\end{equation*}
Therefore, $u$ must be trivial, that is, $u\equiv 0$ in $\mathbb{R}^n$.

\noindent COI: The authors declared that they had no conflict of interest.
\bibliographystyle{siam}
\bibliography{ref}
\end{document}